\def\wT{\widehat{T}}
\newcommand{\ignore}[1]{}
\def\Bin{{\rm Bin}}
\def\KS2{{\cal KS}2}
\newcommand{\brac}[1]{\left(#1\right)}
\newcommand{\bfrac}[2]{\brac{\frac{#1}{#2}}}
\newcommand{\set}[1]{\left\{#1\right\}}
\def\cC{{\cal C}}
\def\cE{{\cal E}}
\def\cE{{\cal E}}
\def\a{\alpha}
\def\b{\beta}
\def\d{\delta}
\def\D{\Delta}
\def\f{\phi}
\def\G{\Gamma}
\def\k{\kappa}
\def\l{\lambda}
\def\m{\mu}
\def\n{\nu}
\def\r{\rho}
\def\om{\omega}
\def\whp{w.h.p.}
\newtheoremstyle{plain}%
    {8pt plus2pt minus4pt}%
    {8pt plus2pt minus4pt}%
    {\itshape}%
    {}%
    {\bfseries\scshape}%
    {}%
    {6pt}
    {}%
\newtheoremstyle{remark}%
    {8pt plus2pt minus4pt}%
    {8pt plus2pt minus4pt}%
    {\upshape}
    {}%
    {\bfseries\scshape}%
    {}%
    {6pt}
    {}%
\theoremstyle{plain}
\newtheorem{theorem}{Theorem}
\newtheorem{corollary}[theorem]{Corollary}
\newtheorem{lemma}[theorem]{Lemma}
\theoremstyle{remark}
\newcommand{\beq}[1]{\begin{equation}\label{#1}}
\newcommand{\eeq}{\end{equation}}
\def\cC{\mathcal{C}}
\begin{document}

\title{Rainbow Connection of Random Regular Graphs}

\author{Andrzej Dudek
\thanks{Department of Mathematics, Western Michigan University, Kalamazoo, MI 49008. E-mail: \texttt{andrzej.dudek@wmich.edu}.
Research supported in part by Simons Foundation Grant \#244712.}
\and Alan Frieze\thanks{Department of Mathematical Sciences,
Carnegie Mellon University, Pittsburgh, PA 15213. E-mail: \texttt{alan@random.math.cmu.edu}.
Research supported in part by CCF1013110.}
\and Charalampos E. Tsourakakis
\thanks{Harvard School of Engineering and Applied Sciences, Cambridge, MA 02138.
E-mail: \texttt{babis@seas.harvard.edu}.}}

\maketitle

\begin{abstract} 
An edge colored graph $G$ is rainbow edge connected if any two vertices are connected
by a path whose edges have distinct colors. The rainbow connection of a connected
graph $G$, denoted by $rc(G)$, is the smallest number of colors that are needed in
order to make $G$ rainbow connected. 

In this work we study the rainbow connection of the random 
$r$-regular graph $G=G(n,r)$ of order~$n$, where  $r\ge 4$ is a constant.
We prove that with probability tending to one as $n$ goes to infinity 
the rainbow connection of $G$ satisfies 
$rc(G)=O(\log n)$, which is best possible up to a hidden constant.
\end{abstract} 

\section{Introduction} 

Connectivity is a fundamental graph theoretic property. Recently, the  
concept of rainbow connection was introduced by 
Chartrand, Johns, McKeon and Zhang in \cite{chartrand}. 
We say that a set of edges is {\em rainbow colored}
if its every member has a distinct color. An edge colored graph $G$ is {\em rainbow edge connected} 
if any two vertices are connected by a 
rainbow colored path. Furthermore, the {\em rainbow connection} $rc(G)$ of a connected graph 
$G$ is the smallest number of colors that are needed 
in order to make $G$ rainbow edge connected. 

Notice, that by definition a rainbow edge connected 
graph is also connected. Moreover,
any connected graph has a trivial edge coloring that 
makes it rainbow edge connected, since 
one may color the edges of a given spanning tree with distinct colors. 
Other basic facts established in \cite{chartrand} are 
that $rc(G)=1$ if and only if $G$ is a 
clique and $rc(G)=|V(G)|-1$ if and only if $G$ is a tree. 
Besides its theoretical interest, rainbow connection 
is also of interest in applied settings, such 
as securing sensitive information transfer and networking 
(see, e.g., \cite{chakraborty, lisun}).
For instance, consider the following setting in networking 
\cite{chakraborty}: we want to route messages in 
a cellular network such that each link on the route between
 two vertices is assigned with a distinct channel. 
Then, the minimum number of channels to 
use is equal to the rainbow connection of the underlying network.

Caro, Lev, Roditty, Tuza and Yuster  
\cite{caro} prove that for a connected 
graph $G$ with $n$ vertices and minimum degree $\delta$, 
the rainbow connection satisfies
$rc(G)\leq \frac{\log{\delta}}{\delta}n(1+f(\delta))$, where $f(\delta)$ tends to zero 
as $\delta$ increases. The following simpler bound was also proved in \cite{caro},
$rc(G) \leq n \frac{4\log{n}+3}{\delta}$. 
Krivelevich and Yuster \cite{krivelevichyuster} 
removed the logarithmic factor
from the  upper bound in \cite{caro}. Specifically they proved that
$rc(G) \leq \frac{20n}{\delta}$. Chandran, Das, Rajendraprasad
and Varma \cite{CDRV} improved this upper bound to $\frac{3n}{\d+1}+3$,
which is close to best possible.

As pointed out in \cite{caro} 
the random graph setting poses several intriguing questions. 
Specifically, let $G=G(n,p)$ denote the binomial random graph on $n$ 
vertices with edge probability $p$.
Caro, Lev, Roditty, Tuza and Yuster \cite{caro} proved that $p=\sqrt{\log{n}/n}$ is the sharp threshold for 
the property $rc(G)\leq 2$. This was sharpened to a hitting time result
by Heckel and Riordan \cite{HR}.
He and Liang \cite{heliang} studied further the rainbow connection of random graphs. 
Specifically, they obtain a threshold for the property $rc(G) \leq d$
where $d$ is constant. 
Frieze and Tsourakakis \cite{gnprainbow} studied  
the rainbow connection of $G=G(n,p)$ 
at the connectivity threshold $p=\frac{\log{n}+{\om}}{n}$ 
where $\om\to\infty$ and $\om=o(\log{n})$. They showed that  \whp\footnote{An event $\cE_n$
occurs {\em with high probability}, or \whp\ for brevity, if
$\lim_{n\rightarrow\infty}\Pr(\cE_n)=1$.} 
$rc(G)$ is asymptotically equal to $\max\set{diam(G), Z_1(G)}$, where $Z_1$ is the number of vertices of 
degree one. 

For further results and references we refer 
the interested reader to the recent survey of Li, Shi and Sun \cite{lisun}. 

In this paper we study the rainbow connection of
the random $r$-regular graph $G(n,r)$ of order $n$, where $r\geq 4$ is a constant and
$n\to \infty$. It was shown in Basavaraju, Chandran, Rajendraprasad, and Ramaswamy~\cite{BCRR} that for any bridgeless graph $G$,
$rc(G)\leq \r(\r+2)$, where $\r$ is the radius of $G=(V,E)$, i.e., 
$\min_{x\in V}\max_{y\in V} dist(x,y)$. Since the radius of $G(n,r)$ is $O(\log n)$ \whp,  we see that \cite{BCRR} implies that
$rc(G(n,r))=O(\log^2n)$ \whp The following theorem gives an improvement on this for $r\geq 4$.  

\begin{theorem}\label{thrm:mainthrm}
Let $r\ge 4$ be a constant. Then, \whp\
$rc(G(n,r))=O(\log n)$.
\end{theorem}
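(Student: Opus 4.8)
The plan is to exhibit a single random edge-colouring with $K\log n$ colours (for a suitable constant $K=K(r)$) and to show that \whp\ every pair of vertices is joined by a rainbow path. Since $rc(G)\ge \mathrm{diam}(G)=\Omega(\log n)$ \whp, this would match the lower bound up to the constant $K$. The real work is to guarantee, simultaneously for all $\binom n2$ pairs, the existence of at least one \emph{short} rainbow path, so the per-pair failure probability must be pushed below $n^{-2}$ before a union bound is applied.

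First I would record the neighbourhood-growth properties of $G(n,r)$ that the argument needs. Working in the configuration model, \whp\ the ball $B_t(u)=\{w:\dist(u,w)\le t\}$ grows geometrically, roughly like $(r-1)^t$, since each newly reached vertex contributes $r-1\ge 3$ further half-edges and short cycles are rare at these scales. Fix a small $\eps>0$; then \whp, for every vertex $u$, there are at least $n^{2/3}$ vertices within distance $t_0=O(\log n)$ of $u$, each reachable from $u$ by a path of length at most $t_0$. I would use the hypothesis $r\ge4$ here to secure expansion (connectivity and fast, robust ball growth) strong enough for the path-abundance estimates below; the borderline branching of the $r=3$ case would require extra care.

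Next, for a fixed ordered pair $(u,v)$ I would connect the two grown regions. Let $U$ and $V$ be the sets of size $\ge n^{2/3}$ reached from $u$ and from $v$ respectively. A first/second-moment estimate on the configuration model shows that \whp\ there are $n^{\Omega(1)}$ edges $xy$ with $x\in U$, $y\in V$ (the expected number being of order $(n^{2/3})^2/n=n^{1/3}$). Each such crossing edge yields a path $u\rightsquigarrow x\to y\rightsquigarrow v$ of length $O(\log n)$, giving a family $\cP(u,v)$ of $n^{\Omega(1)}$ paths. For a single path $P$ of length $\ell=O(\log n)$, the random colouring makes $P$ rainbow with probability $\prod_{i=0}^{\ell-1}\bigl(1-i/(K\log n)\bigr)=n^{-\delta}$, where $\delta=\delta(K)\to 0$ as $K\to\infty$; choosing $K$ large forces $\delta<\tfrac13$, so the \emph{expected} number of rainbow paths in $\cP(u,v)$ is at least $n^{\Omega(1)}\cdot n^{-\delta}\to\infty$.

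The main obstacle is converting this diverging expectation into a $1-o(n^{-2})$ lower bound on the probability that \emph{some} $P\in\cP(u,v)$ is rainbow. Distinct paths in $\cP(u,v)$ share edges — most severely in their prefixes near $u$ and suffixes near $v$, which the growth process forces to coincide in long common blocks — so the events ``$P$ is rainbow'' are positively correlated and a naive independence bound fails. I expect the key technical lemma to be a careful control of $\sum_{P,P'}\Prob(P,P'\ \text{both rainbow})$, bounding how much shared colours on overlapping edges inflate the correlation and showing the second moment is within a constant factor of the square of the mean; equivalently, one prunes $\cP(u,v)$ to a subfamily with short pairwise edge-overlaps and then applies Janson's inequality to the bad events. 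Combined with a union bound over the $\binom n2$ pairs, this yields that \whp\ the colouring makes $G(n,r)$ rainbow connected, giving $rc(G(n,r))=O(\log n)$.
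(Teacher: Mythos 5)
You have correctly identified the hard step --- converting a diverging expected number of rainbow $u$--$v$ paths into a per-pair failure probability of $o(n^{-2})$ --- but the resolution you propose for it fails, and this is a genuine gap. Every candidate path leaves $u$ through one of only $r$ edges and enters $v$ through one of only $r$ edges, so any subfamily of pairwise edge-disjoint $u$--$v$ paths has size at most $r=O(1)$; with $O(1)$ paths, each rainbow with probability $n^{-\delta}$, the failure probability tends to $1$. If instead you keep the whole overlapping family, a second-moment computation (pairs sharing a prefix of length $k$ number about $N^2(r-1)^{-k}$ and carry a correlation factor of roughly $\prod_{j<k}(1-j/q)^{-1}$) does give a variance of order $o(\mu^2)$, but Chebyshev then yields only $\Prob(X=0)=o(1)$, nowhere near $o(n^{-2})$. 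The exponential tools you invoke do not rescue this: Janson's inequality applies to monotone events of the form ``the random set contains $B_i$'', which ``$P$ is rainbow'' is not, and Suen-type substitutes require the correlation term $\Delta=\sum_{i\sim j}\Prob(A_i\cap A_j)$ to be small compared with $\mu$, whereas here at least a $1/r^2$ fraction of all ordered pairs of paths share their first edge at $u$, so $\Delta=\Omega(\mu^2)\gg\mu$ and the bounds are vacuous. There is no pruning that retains $n^{\Omega(1)}$ paths \emph{and} disjointness near the endpoints, because the endpoints have constant degree.

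The paper escapes exactly this trap by splitting each path into a local part that is made rainbow \emph{deterministically} and a global part whose randomness acts on \emph{disjoint} edge sets. The colouring is not uniform: edges are coloured greedily, each receiving a uniformly random colour not already used within distance $k_r=\log_{r-1}(K_1\log n)$, so every path of length at most $k_r$ is rainbow by construction, and a conditional estimate ($\Prob(c(e)=x\mid\cC)\le 2/q$, \eqref{colcond}) shows the colouring still behaves almost independently at larger scales. A purely combinatorial lemma about pairs of rainbow $(r-1)$-ary trees (Lemma~\ref{lemcol1}, Corollary~\ref{cor1}) then produces $\Omega(\log n)$ leaf pairs of $T_x,T_y$ whose two root-to-leaf paths are \emph{jointly} rainbow; this is the step that requires $r-1\ge 3$ and is the actual reason $r=3$ is excluded (expansion, to which you attribute the hypothesis $r\ge 4$, holds for $r=3$ as well). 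From these starting pairs one grows, with pruning, $n^{1/21}$ vertex-disjoint connecting segments of length $O(\log n)$, whose rainbow events are nearly independent, so the probability that all of them fail is $\brac{1-n^{-o(1)}}^{n^{1/21}}=o(n^{-3})$, which survives the union bound. Your outline is missing both ingredients: a colouring that trivialises the congested portions of the paths near $u$ and $v$, and a construction that makes the surviving randomness act on disjoint pieces.
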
 

\noindent The rainbow connection of any graph~$G$ is at least as large 
as its diameter. The diameter of $G(n,r)$ is \whp\ 
asymptotically $\log_{r-1}{n}$ and so the above theorem is best possible,
up to a (hidden) constant factor.

We conjecture that Theorem \ref{thrm:mainthrm} can be extended to include $r=3$.
Unfortunately, the approach taken in this paper does not seem to work in this case.

\section{Proof of Theorem~\ref{thrm:mainthrm}} 
\label{sec:regular} 


\subsection{Outline of strategy}
Let $G=G(n,r)$, $r\ge 4$. Define 
\begin{equation}\label{eq:kr}
k_r= \log_{r-1}(K_1\log n),
\end{equation}
where $K_1$ will be a sufficiently large absolute constant. Recall that the {\em distance between two vertices} in $G$ is the number of edges in a shortest path connecting them and the {\em distance between two edges} in $G$ is the number of vertices in a shortest path between them. (Hence, both adjacent vertices and incident edges have distance 1.)

For each vertex $x$ let $T_x$ be the subgraph of $G$ induced by the vertices within distance
$k_r$ of $x$.
We will see (due to Lemma~\ref{density})
that \whp, $T_x$ is a tree for most $x$ and that
for all $x$, $T_x$ contains at most one cycle. We say that $x$ is 
{\em tree-like} if $T_x$ is a tree. In which case we denote by $L_x$ the
leaves of $T_x$. Moreover, if $u\in L_x$, then we denote the path
from $u$ to $x$ by $P(u,x)$. 

We will randomly color $G$ in such a way that the edges of every path $P(u,x)$  is rainbow
colored for all $x$. This is how we do it. We order the edges of $G$ in some
arbitrary manner as $e_1,e_2,\ldots,e_m$, where $m=rn/2$. There will be a set
of $q = \lceil K_1^2 r\log n\rceil$ 
colors available. Then, in the order $i=1,2,\ldots,m$
we randomly color $e_i$. We choose this color uniformly
from the set of colors not used by those $e_j,j<i$ which are within 
distance $k_r$ of $e_i$. Note that the number of edges within distance $k_r$ of $e_i$ is at most
\begin{equation}\label{eq:edges_k_r}
2\left( (r-1)  +(r-1)^2 + \dots + (r-1)^{\lfloor k_r \rfloor-1} \right) \le (r-1)^{k_r} = K_1 \log n.
\end{equation}
So for $K_1$ sufficiently large we always have many colors that can be used for $e_i$.
Clearly, in such a coloring, the edges of a path $P(u,x)$ are rainbow colored.

Now consider a fixed pair of tree-like vertices $x,y$. We will show (using Corollary~\ref{cor1}) that one can find
a partial 1-1 mapping $f=f_{x,y}$ between $L_x$ and $L_y$ such that if
$u\in L_x$ is in the domain $D_{x,y}$ 
of $f$ then $P(u,x)$ and $P(f(u),y)$ do not share
any colors. The domain $D_{x,y}$ 
of $f$ is guaranteed to be of size at least $K_2\log n$, where $K_2=K_1/10$.

Having identified $f_{x,y},D_{x,y}$ we then search for a rainbow path
joining $u\in D_{x,y}$ to $f(u)$. 
To join $u$ to $f(u)$
we continue to grow the trees $T_x,T_y$ until there are 
$n^{1/20}$ leaves. Let the new larger trees be denoted by $\wT_x,\wT_y$, respectively. As we
grow them, we are careful to prune away edges where the edge to root path
is not rainbow. We do the same with $T_y$ and here make sure that edge
to root paths are rainbow with respect to corresponding $T_x$ paths. 
We then construct at least $n^{1/21}$ vertex disjoint paths $Q_1,Q_2,\ldots,$ from the leaves
of $\wT_x$ to the leaves of $\wT_y$. We then argue that \whp\ one of these
paths is rainbow colored and that the colors used are disjoint from the 
colors used on $P(u,x)$ and $P(f(u),y)$.

We then finish the proof by dealing with non tree-like vertices in 
Section \ref{nontree}.

\subsection{Coloring lemmata}
In this section we prove some auxiliary results about rainbow colorings of $d$-ary trees.

Recall that a {\em complete $d$-ary tree $T$} is a rooted tree in
which each non-leaf vertex has exactly $d$ children. The {\em depth}
of an edge is the number of vertices in the path connecting the root
to the edge. The set of all edges at a given depth is called a {\em
  level} of the tree. The {\em height} of a tree is the distance from
the root to the deepest vertices in the tree (i.e. the leaves). Denote by $L(T)$ the set of leaves and for $v\in L(T)$ let $P(v,T)$ be the path from the root
of $T$ to $v$ in $T$.

\begin{lemma}\label{lemcol}
Let $T_1,T_2$ be two vertex disjoint \emph{rainbow} copies of  
the complete $d$-ary tree with $\ell$ levels, where 
$d\geq 3$. Let $T_i$ be rooted at $x_i$, $L_i=L(T_i)$ for $i=1,2$, and
\[
m(T_1,T_2)=\left| \{ (v,w)\in L_1\times L_2 : P(v,T_1) \cup P(w,T_2)\text{ is rainbow} \}\right|.
\]
Let 
$$\k_{\ell,d}=\min_{T_1,T_2}\set{m(T_1,T_2)}.$$
Then, 
\begin{equation}\label{eq:col_lem1}
\k_{\ell,d}\geq d^{2\ell}/4.
\end{equation}
\end{lemma}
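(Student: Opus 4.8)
The plan is to reformulate the count probabilistically. Since $T_1$ and $T_2$ are each rainbow, every root-to-leaf path $P(v,T_1)$ and $P(w,T_2)$ is internally rainbow, so the union $P(v,T_1)\cup P(w,T_2)$ fails to be rainbow precisely when the two paths share a common color. I would choose $v\in L_1$ and $w\in L_2$ independently and uniformly at random, and let $X$ denote the number of colors appearing on both $P(v,T_1)$ and $P(w,T_2)$. Then $m(T_1,T_2)=d^{2\ell}\,\Prob(X=0)$, so it suffices to show $\Prob(X=0)\ge 1/2$, which is in fact stronger than the claimed $1/4$. I would establish this through Markov's inequality, for which I need a good upper bound on $\E[X]$.

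To compute $\E[X]$, fix a color $c$. Because each tree is rainbow, $c$ colors at most one edge of $T_1$; if that edge sits at level $j_1(c)$, then $c$ lies on $P(v,T_1)$ iff $v$ is one of the $d^{\ell-j_1(c)}$ leaves below that edge, an event of probability $d^{-j_1(c)}$. Define $j_2(c)$ analogously for $T_2$. By independence of $v$ and $w$,
\[
\E[X]=\sum_{c}d^{-j_1(c)}d^{-j_2(c)},
\]
the sum running over the colors shared by the two trees.

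Next I would bound this sum. Applying the inequality $d^{-j_1-j_2}\le \tfrac12(d^{-2j_1}+d^{-2j_2})$ (AM--GM) term by term splits the estimate into a $T_1$-part and a $T_2$-part. For the $T_1$-part I group the shared colors by their level $j_1(c)=i$: since level $i$ of $T_1$ contains only $d^i$ edges, at most $d^i$ shared colors have $j_1(c)=i$, whence $\sum_c d^{-2j_1(c)}\le\sum_{i=1}^{\ell}d^i d^{-2i}=\sum_{i=1}^{\ell}d^{-i}<\tfrac1{d-1}$. The $T_2$-part is identical, giving $\E[X]\le \tfrac1{d-1}\le\tfrac12$ precisely because $d\ge3$. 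Markov's inequality then yields $\Prob(X\ge1)\le\E[X]\le\tfrac12$, hence $\Prob(X=0)\ge\tfrac12$ and $m(T_1,T_2)\ge d^{2\ell}/2$, uniformly over all rainbow colorings, which gives $\kappa_{\ell,d}\ge d^{2\ell}/4$ with room to spare.

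The only real content is the bound on $\E[X]$, and the subtlety to get right there is the counting constraint that drives it: a shared color occupies exactly one edge in each tree, so the number of shared colors at a given level of $T_1$ cannot exceed the number of edges at that level. Everything else is a routine expectation computation, and the hypothesis $d\ge3$ enters only at the very end, to force the geometric series $\sum_i d^{-i}$ to be at most $1/2$; for $d=2$ the same argument would give only $\E[X]\le1$, which is why the method (and the lemma) needs $d\ge3$.
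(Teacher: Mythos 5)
Your proof is correct, and it takes a genuinely different route from the paper's. The paper argues by induction on the number of levels: it peels off the leaf level, notes that each good pair $(v',w')$ of penultimate vertices spawns $d^2$ candidate leaf pairs, and subtracts the spoiled ones by summing, over colors $c$ on leaf edges of $T_1$, the number $\nu(c)$ of root-to-leaf paths of $T_2$ containing $c$; this yields $\kappa_{\ell,d}\ge\bigl(1-\sum_{i=1}^{\ell}i/d^{i}\bigr)d^{2\ell}\ge d^{2\ell}/4$. Your first-moment argument is cleaner and gives a sharper constant: since each tree is rainbow, a shared color occupies exactly one edge in each tree, an edge at level $j$ lies on a uniformly random root-to-leaf path with probability $d^{-j}$, and the AM--GM split plus the count of at most $d^{i}$ shared colors per level gives $\E[X]\le 1/(d-1)\le 1/2$, hence $m(T_1,T_2)\ge d^{2\ell}/2$ by Markov. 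Both arguments break at $d=2$ because the same geometric-type series becomes too large, consistent with the paper's counterexample in Figure~\ref{fig1}. The one thing the paper's inductive formulation buys is that its bookkeeping transfers almost verbatim to Lemma~\ref{lemcol1}, where the trees are only ``locally'' rainbow and a color may recur on up to $d^{\ell+1-\lfloor L/2\rfloor}$ leaf edges; your expectation computation would need to be reworked there, since the key step that each color occupies exactly one edge per tree no longer holds.
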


\begin{proof} We prove that 
\[
\k_{\ell,d}\geq \brac{1-\sum_{i=1}^\ell \frac{i}{d^{i}}} d^{2\ell}\geq d^{2\ell}/4.
\]
We prove this by induction on $\ell$. If $\ell=1$, then clearly
\[
\k_{1,d}=d(d-1).
\]
Suppose that \eqref{eq:col_lem1} holds for an $\ell \ge 2$.

Let $T_1,T_2$ be rainbow trees of height $\ell+1$. Moreover, let $T_1' = T_1\setminus L(T_1)$ and 
$T_2' = T_2\setminus L(T_2)$. 
We show that 
\begin{equation}\label{eq:col_lem1:m}
m(T_1,T_2) \ge d^2 \cdot m(T_1',T_2') - (\ell+1)d^{\ell+1}.
\end{equation}
Each $(v',w')\in L_1'\times L_2'$ gives rise to $d^2$ pairs of leaves 
$(v,w)\in L_1\times L_2$, where $v'$ is the parent of $v$ and $w'$ is the parent of $w$. 
Hence, the term $d^2 \cdot m(T_1',T_2')$ accounts for the pairs 
$(v,w)$, where $P_{v',T_1'}\cup P_{w',T_2'}$ is rainbow. 
We need to subtract off those  pairs for which $P_{v,T_1}\cup P_{w,T_2}$ is not rainbow. 
Suppose that this number is $\n$.
Let $v\in L(T_1)$ and let $v'$ be its parent, and let
$c$ be the color of the edge $(v,v')$. 
Then  $P_{v,T_1}\cup P_{w,T_2}$ is rainbow
unless $c$ is the color of some edge of $P_{w,T_2}$. 
Now let $\n(c)$ denote the number of root to leaf paths in $T_2$ that 
contain an edge color $c$. Thus,
\[
\n \leq \sum_{c} \n(c),
\]
where the summation is taken over all colors $c$ that appear in edges of $T_1$ adjacent to leaves. We bound this sum trivially, by summing over all colors in $T_2$ (i.e., over all edges in $T_2$, since $T_2$ is rainbow). Note that if the depth of the edge colored $c$ in $T_2$ is $i$, then $\n(c) \le d^{\ell+1-i}$.
Thus, summing over edges of $T_2$ gives us
\[
\sum_c\n(c)\leq\sum_{i=1}^{\ell+1}d^{\ell+1-i} \cdot d^i=(\ell+1)d^{\ell+1},
\]
and consequently \eqref{eq:col_lem1:m} holds. Thus, by induction (applied to $T_1'$ and $T_2'$)
\begin{align*}
m(T_1,T_2) &\ge d^2 \cdot m(T_1',T_2') - (\ell+1)d^{\ell+1}\\
&\ge d^2 \brac{1-\sum_{i=1}^\ell \frac{i}{d^{i}}}d^{2\ell} - (\ell+1)d^{\ell+1}\\
&\ge \brac{1-\sum_{i=1}^{\ell+1} \frac{i}{d^{i}}}d^{2(\ell+1)},
\end{align*}
as required.

\end{proof}

In the proof of Theorem~\ref{thrm:mainthrm} we will need a stronger version of the above lemma.

\begin{lemma}\label{lemcol1}
Let $T_1,T_2$ be two vertex disjoint edge colored copies of  
the complete $d$-ary tree with  $L$ levels, where 
$d\geq 3$. For $i=1,2$, let $T_i$ be rooted at $x_i$ and
suppose that edges $e,f$ of $T_i$ have a different color whenever
the distance between $e$ and $f$ in $T_i$ is at most $L$. Let
$\k_{\ell,d}$ be as defined in Lemma \ref{lemcol}. Then
\[
\k_{L,d}\geq   \brac{1-\frac{L^2}{d^{\lfloor L/2 \rfloor}}-\sum_{i=1}^{\lfloor L/2 \rfloor} \frac{i}{d^{i}}}d^{2L}.
\]
\end{lemma}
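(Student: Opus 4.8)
The plan is to isolate the part of each tree where the hypothesis already forces a genuine rainbow coloring, reduce that part to Lemma~\ref{lemcol}, and then dispose of the remaining ``deep'' collisions by a crude count that is exponentially small in $L$.

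First I would record the structural fact that makes the weaker hypothesis usable. In $T_i$, the shortest path between an edge at depth $a$ and an edge at depth $b$ runs up to their lowest common ancestor and down again, and is longest when that ancestor is the root, in which case it passes through $a+b-1$ vertices; hence $\dist(e,f)\le a+b-1$ always. Consequently, any two edges of depth at most $\lfloor L/2\rfloor$ satisfy $\dist(e,f)\le 2\lfloor L/2\rfloor-1\le L$, so by hypothesis they receive distinct colors. Thus \emph{the top $\lfloor L/2\rfloor$ levels of each $T_i$ are rainbow}. Writing $\hat T_i$ for the complete $d$-ary tree formed by these top levels, the pair $\hat T_1,\hat T_2$ is a pair of rainbow trees with $\lfloor L/2\rfloor$ levels, to which Lemma~\ref{lemcol} applies directly.

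Next I would bound the number of \emph{bad} leaf pairs, meaning $(v,w)\in L_1\times L_2$ with $P(v,T_1)\cup P(w,T_2)$ not rainbow, by splitting every color collision according to whether both offending edges are shallow (depth $\le\lfloor L/2\rfloor$) or at least one is deep, and applying the union bound over collisions. For the shallow--shallow collisions: if $(v,w)$ has one, then projecting $v,w$ to their ancestors $\hat v,\hat w$ at level $\lfloor L/2\rfloor$ yields a pair with $P(\hat v,\hat T_1)\cup P(\hat w,\hat T_2)$ non-rainbow, and by Lemma~\ref{lemcol} the number of such bad projections is at most $\big(\sum_{i=1}^{\lfloor L/2\rfloor} i/d^{i}\big)d^{2\lfloor L/2\rfloor}$. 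Since the projection is at most $d^{2(L-\lfloor L/2\rfloor)}$-to-one, the shallow--shallow bad pairs number at most $\big(\sum_{i=1}^{\lfloor L/2\rfloor} i/d^{i}\big)d^{2L}$, which is exactly the middle error term with coefficient $1$.

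For the deep collisions I would count directly. For a color $c$ set $S_i(c)=\sum_{e:\,\mathrm{col}(e)=c}d^{-\mathrm{depth}(e)}$, summed over edges of $T_i$, so that the number of leaf pairs sharing color $c$ is at most $d^{2L}S_1(c)S_2(c)$; the deep part is carried by the restriction $S_i^{b}(c)$ of this sum to edges of depth $>\lfloor L/2\rfloor$. The crucial estimate is that equal-colored deep edges must be spread out: two depth-$k$ edges with lowest common ancestor at level $h$ are at distance $2(k-h)-1$, so equal color forces $h<k-(L+1)/2$, whence at most $d^{\,k-\lfloor L/2\rfloor}$ edges of a fixed color can occur at depth $k$. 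Summing gives $S_i^{b}(c)\le L\,d^{-\lfloor L/2\rfloor}$, while $\sum_c S_i^{t}(c)\le L$ and $\sum_c S_i^{b}(c)\le L$; feeding these into the three products $S_1^{t}S_2^{b}$, $S_1^{b}S_2^{t}$, $S_1^{b}S_2^{b}$ bounds the deep-involved bad pairs by $O\!\big(L^2 d^{-\lfloor L/2\rfloor}\big)\cdot d^{2L}$. Subtracting both error terms from $d^{2L}$ gives the claimed inequality. I expect the deep count to be the main obstacle: the shallow--shallow term is Lemma~\ref{lemcol} essentially verbatim, but squeezing the deep contribution down to the stated coefficient $L^2/d^{\lfloor L/2\rfloor}$, rather than a constant multiple of it, requires running the packing estimate for equal-colored deep edges carefully, tracking $\lceil\cdot\rceil$ versus $\lfloor\cdot\rfloor$ in the depth bookkeeping and absorbing the overcount built into the union bound over collisions.
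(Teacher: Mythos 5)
Your argument is correct, but it is a genuinely different route from the paper's. The paper proves the lemma by continuing the level-by-level induction of Lemma \ref{lemcol}: it runs the same recursion $m(T_1^{\ell+1},T_2^{\ell+1})\ge d^2 m(T_1^\ell,T_2^\ell)-(\text{error})$ past level $\lfloor L/2\rfloor$, with the error term inflated by a factor $d^{\ell+1-\lfloor L/2\rfloor}$ coming from the observation that a single color can occur on at most $d^{\ell+1-\lfloor L/2\rfloor}$ leaf edges (same-colored edges at equal depth must have a shallow common ancestor). You instead give a direct, non-inductive double count: the top $\lfloor L/2\rfloor$ levels are genuinely rainbow, so the shallow--shallow collisions reduce to Lemma \ref{lemcol} via projection to level $\lfloor L/2\rfloor$, and the deep-involved collisions are killed by the weighted sums $S_i(c)$ together with the same packing fact (at most $d^{k-\lfloor L/2\rfloor}$ same-colored edges at depth $k$). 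Both proofs hinge on exactly those two structural facts; yours is more modular and avoids re-running the induction, while the paper's recursion delivers the constant $1$ on the $L^2/d^{\lfloor L/2\rfloor}$ term automatically. Your worry about that constant is resolvable within your own framework: using $\sum_c S_i^{t}(c)=\lfloor L/2\rfloor$ and $\sum_c S_i^{b}(c)=L-\lfloor L/2\rfloor$ (rather than the cruder bound $L$ for each), the three cross terms total at most $\bigl(2\lfloor L/2\rfloor\lceil L/2\rceil+\lceil L/2\rceil^2\bigr)d^{-\lfloor L/2\rfloor}\le L^2 d^{-\lfloor L/2\rfloor}$ for $L\ge 3$, and for $L\le 2$ the stated bound is vacuous. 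Two small points to make explicit in a write-up: you need the refined conclusion $\kappa_{\ell,d}\ge\bigl(1-\sum_{i\le\ell} i/d^i\bigr)d^{2\ell}$ from the \emph{proof} of Lemma \ref{lemcol} (its statement only records $d^{2\ell}/4$), and you should note that each individual root-to-leaf path is automatically rainbow under the hypothesis, so all collisions are indeed cross-tree.
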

\begin{proof}
Let $T_i^\ell$ be the subtree of $T_i$ spanned by the first $\ell$ levels, where $1\le \ell \le L$ and $i=1,2$.
We show by induction on $\ell$ that
\begin{equation}\label{eq:col_lem2}
m(T_1^\ell, T_2^{\ell}) \ge \brac{1-\frac{\ell^2}{d^{\lfloor L/2 \rfloor}}-\sum_{i=1}^{\lfloor L/2 \rfloor} \frac{i}{d^{i}}}d^{2\ell}.
\end{equation}
Observe first that Lemma \ref{lemcol} implies \eqref{eq:col_lem2} for $1\le \ell \le \lfloor L/2 \rfloor -1$, since in this case $T_1^\ell$ and $T_2^\ell$ must be rainbow.

Suppose that $\lfloor L/2 \rfloor \leq\ell<L$ and consider the case where $T_1,T_2$ have height $\ell+1$. Following the argument of Lemma \ref{lemcol}
we observe that color $c$ can be the color of at most $d^{\ell +1 - \lfloor L/2 \rfloor}$ leaf edges of $T_1$. This is because for two leaf edges to 
have the same color, their common ancestor must be at distance (from the root) at most 
$\ell-\lfloor L/2 \rfloor$. Therefore, 
\begin{align*}
m(T_1^{\ell+1}, T_2^{\ell+1}) &\geq d^{2}\cdot m(T_1^\ell, T_2^\ell)  -d^{\ell+1-\lfloor L/2 \rfloor} \sum_c\n(c) \\
&\geq d^{2}\cdot m(T_1^\ell, T_2^\ell)  -d^{\ell+1-\lfloor L/2 \rfloor }(\ell+1)d^{\ell+1}\\
&= d^{2}\cdot m(T_1^\ell, T_2^\ell)  -(\ell+1)d^{2(\ell+1)-\lfloor L/2 \rfloor }.
\end{align*}
Thus, by induction
\begin{align*}
m(T_1^{\ell+1}, T_2^{\ell+1}) &\ge d^{2}\brac{1-\frac{\ell^2}{d^{\lfloor L/2 \rfloor}}-\sum_{i=1}^{\lfloor L/2 \rfloor} \frac{i}{d^{i}}}d^{2\ell}  -(\ell+1)d^{2(\ell+1)-\lfloor L/2 \rfloor }\\
&= \brac{1-\frac{\ell^2 + \ell+1}{d^{\lfloor L/2 \rfloor}}-\sum_{i=1}^{\lfloor L/2 \rfloor} \frac{i}{d^{i}}}d^{2(\ell+1)}\\
&\ge \brac{1-\frac{(\ell+1)^2}{d^{\lfloor L/2 \rfloor}}-\sum_{i=1}^{\lfloor L/2 \rfloor} \frac{i}{d^{i}}}d^{2(\ell+1)}
\end{align*}
yielding~\eqref{eq:col_lem2} and consequently the statement of the lemma.
\end{proof}

\begin{corollary}\label{cor1}
Let $T_1,T_2$ be as in Lemma \ref{lemcol1}, except that
the root degrees are $d+1$ instead of $d$. If $d\geq 3$ and $L$ is
sufficiently large, then 
there exist $S_i\subseteq L_i, i=1,2$ and a bijection $f:S_1\to S_2$ such that
\begin{enumerate}[(a)]
\item $|S_i|\geq d^L/10$, and
\item $x\in S_1$ implies that $P_{x,T_1}\cup P_{f(x),T_2}$ is rainbow.
\end{enumerate}
\end{corollary}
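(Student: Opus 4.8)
The plan is to recast the conclusion as a matching problem in an auxiliary bipartite graph and to feed Lemma~\ref{lemcol1} into a density-based matching bound. Define the bipartite graph $H$ with parts $L_1$ and $L_2$, putting an edge between $v\in L_1$ and $w\in L_2$ exactly when $P_{v,T_1}\cup P_{w,T_2}$ is rainbow; by definition the number of edges of $H$ equals $m(T_1,T_2)$. A matching in $H$ is precisely a partial injection $f$ all of whose pairs satisfy condition~(b), so the whole task reduces to producing a matching of size at least $d^L/10$.

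First I would strip the extra root edge so that Lemma~\ref{lemcol1} applies verbatim. For $i=1,2$ delete one of the $d+1$ subtrees hanging off the root $x_i$, keeping the root together with the remaining $d$ child-subtrees; call the result $T_i^*$. Then $T_i^*$ is a complete $d$-ary tree with $L$ levels, $L(T_i^*)\subseteq L_i$, and since the unique path between two edges of $T_i$ that both lie in $T_i^*$ stays inside $T_i^*$, the distance-$L$ proper-coloring hypothesis is inherited. Every rainbow pair counted by $m(T_1^*,T_2^*)$ is also a rainbow pair of the full trees, so $m(T_1,T_2)\ge m(T_1^*,T_2^*)$. Lemma~\ref{lemcol1} then gives $m(T_1^*,T_2^*)\ge \brac{1-\frac{L^2}{d^{\lfloor L/2\rfloor}}-\sum_{i=1}^{\lfloor L/2\rfloor}\frac{i}{d^{i}}}d^{2L}$, whose bracketed coefficient tends to $1-\sum_{i\ge 1} i/d^{i}=1-d/(d-1)^2\ge \tfrac14$ as $L\to\infty$; hence for $L$ large enough $m(T_1,T_2)\ge \tfrac15 d^{2L}$.

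Next I would extract the matching by a König/defect-Hall argument. Every vertex of $H$ has degree at most $|L_i|=(d+1)d^{L-1}$, so a minimum vertex cover — whose size equals the maximum matching number $\nu$ by König's theorem — covers at most $\nu(d+1)d^{L-1}$ edges, whence
\[
\nu \ge \frac{m(T_1,T_2)}{(d+1)d^{L-1}} \ge \frac{d^{2L}/5}{(d+1)d^{L-1}} = \frac{d^{L+1}}{5(d+1)} \ge \frac{3}{20}\,d^{L} > \frac{d^{L}}{10},
\]
where the penultimate inequality is exactly the condition $d\ge 3$ (it amounts to $4d\ge 3(d+1)$). Taking $S_1,S_2$ to be the two endpoint sets of a maximum matching and $f$ the matching itself yields the required data: $|S_1|=|S_2|=\nu\ge d^L/10$ gives~(a), and~(b) holds because matching edges are rainbow pairs by construction.

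The only content beyond bookkeeping is the passage from the $(d+1)$-ary root to the $d$-ary lemma and the check that the resulting constant clears $1/10$; both are mild, and deleting a single root-subtree is what keeps the constants clean. The matching step is then routine, since $H$ is dense — a constant fraction of all $|L_1|\,|L_2|$ pairs — while its maximum degree is only $|L_i|$, so a dense-graph matching bound is all that is needed.
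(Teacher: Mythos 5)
Your proof is correct and follows essentially the same route as the paper: discard one root subtree so that Lemma~\ref{lemcol1} applies, deduce $m(T_1,T_2)\ge d^{2L}/5$ for large $L$, and extract a matching of size at least $d^L/10$ from the dense bipartite graph of rainbow pairs. The only (immaterial) difference is that you extract the matching via K\H{o}nig's theorem and a degree bound, whereas the paper uses a greedy argument showing that any matching with fewer than $d^L/10$ pairs can still be extended.
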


\begin{proof}
To deal with the root degrees being $d+1$ we simply ignore one of the 
subtrees of each of the roots.
Then note that if $d\geq 3$ then 
\[
1-\frac{L^2}{d^{\lfloor L/2 \rfloor}}-\sum_{i=1}^{\lfloor L/2 \rfloor} \frac{i}{d^{i}}
\ge 1-\frac{L^2}{d^{\lfloor L/2 \rfloor}}-\sum_{i=1}^{\infty} \frac{i}{d^{i}}
= 1-\frac{L^2}{d^{\lfloor L/2 \rfloor}}-\frac{d}{(d-1)^2} \ge \frac{1}{5}
\]
for $L$ sufficiently large.
Now we choose $S_1,S_2$ in a greedy manner. Having chosen a matching 
$(x_i,y_i=f(x_i))\in L_1\times L_2$, $i=1,2,\ldots,p$, and $p < d^L/10$, there will still be at
least
$d^{2L}/5-2pd^L>0$ pairs in $m(T_1,T_2)$ that can be added to the matching.
\end{proof}

\subsection{Configuration model}\label{sec:configuration}
We will use the configuration model of Bollob\'as \cite{b1} in our proofs 
(see, e.g.,  \cite{bollobas, JLR, wormald} for details).
Let $W=[2m=rn]$ be our set
of {\em configuration points} and let $W_i=[(i-1)r+1,ir]$,
$i\in [n]$, partition $W$. The function $\f:W\to[n]$ is defined by
$w\in W_{\f(w)}$. Given a
pairing $F$ (i.e. a partition of $W$ into $m$ pairs) we obtain a
(multi-)graph $G_F$ with vertex set $[n]$ and an edge $(\f(u),\f(v))$ for each
$\{u,v\}\in F$. Choosing a pairing $F$ uniformly at random from
among all possible pairings $\Omega_W$ of the points of $W$ produces a random
(multi-)graph $G_F$. 
Each $r$-regular simple graph $G$ on vertex set $[n]$ is 
equally likely to be generated as $G_F$.
Here simple means without loops or multiple edges. 
Furthermore, if $r$ is a constant, then $G_F$ is simple with a 
probability bounded below by a positive value independent of $n$.
Therefore, any event that occurs \whp\ in $G_F$ will also occur \whp\ in $G(n,r)$.

\subsection{Density of small sets}
Here we show that \whp\  almost every subgraph of a random regular
graph induced by the vertices within a certain small distance is a tree.
Let
\begin{equation}\label{eq:ell1}
t_0= \frac{1}{10}\log_{r-1}n.
\end{equation}
\begin{lemma}\label{density}
Let $k_r$ and $t_0$ be defined in \eqref{eq:kr} and \eqref{eq:ell1}. Then, \whp\ in $G(n,r)$ 
\begin{enumerate}[(a)]
\item\label{lem:a} no set of $s\leq t_0$ vertices contains more than $s$ edges, and
\item\label{lem:b} there are at most $\log^{O(1)}n$ vertices that are within distance $k_r$ of a cycle of length at most~$k_r$.
\end{enumerate}
\end{lemma}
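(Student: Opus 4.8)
Both parts are classic first-moment (union bound) arguments in the configuration model, so I would work with $G_F$ and invoke the comparison at the end. For part~\eqref{lem:a}, the plan is to bound the expected number of vertex sets $S$ with $|S|=s$ that span at least $s+1$ edges, and show this sum over $2\le s\le t_0$ tends to $0$. The number of ways to choose $S$ is $\binom{n}{s}$. Given $S$, each of the $r|S|=rs$ configuration points in $S$ has some probability of being paired to another point of $S$; the probability that a fixed collection of $s+1$ disjoint pairs all fall within $S$ is at most $\left(\frac{rs}{rn-O(s)}\right)^{s+1}\le\left(\frac{rs}{rn}\right)^{s+1}(1+o(1))$, and there are at most $\binom{rs}{2(s+1)}\le (rs)^{2(s+1)}$ ways to designate which points form those pairs. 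Using $\binom{n}{s}\le (en/s)^s$, the expected count for a fixed $s$ is bounded by something like

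\begin{equation*}
\bfrac{en}{s}^{s}(rs)^{2(s+1)}\bfrac{rs}{rn}^{s+1}
= \frac{e^s r^{2s+2} s^{s+1}}{n}\cdot\bfrac{s}{n}^{0}\cdots,
\end{equation*}

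which, after collecting powers, behaves like $C^s (s/n)$ times a polynomial in $s$; crucially there is a spare factor of $1/n$ coming from the extra ($(s+1)$-st) edge. Summing over $s\le t_0=\frac{1}{10}\log_{r-1}n$, the dominant constraint is that $s$ is only logarithmic, so $C^s=n^{o(1)}$ and the whole sum is $O(n^{-1+o(1)})=o(1)$. The main thing to check carefully is that the $1/10$ in the definition of $t_0$ is small enough that the geometric-type factor $C^s$ stays below any fixed power of $n$; this is exactly why the range is capped at $\frac{1}{10}\log_{r-1}n$ rather than something larger.

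For part~\eqref{lem:b}, I would first estimate the expected number of short cycles and then multiply by the size of a neighborhood. The expected number of cycles of length exactly $j$ in $G_F$ is $O((r-1)^j/j)$ (a standard configuration-model computation: choose an ordered $j$-tuple of vertices and pair up the appropriate points, giving $\frac{(n)_j}{2j}$ cyclic arrangements times a pairing probability of order $(r/(rn))^j$, which simplifies to $\frac{(r-1)^j}{2j}(1+o(1))$). Summing $j$ from $3$ to $k_r$ gives an expected number of short cycles of order $(r-1)^{k_r}=K_1\log n$, so by Markov's inequality the number of cycles of length at most $k_r$ is $\log^{O(1)}n$ \whp. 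Each such cycle has at most $k_r$ vertices, and the number of vertices within distance $k_r$ of any given vertex is at most $(r-1)^{k_r}+1=O(\log n)$ (using part~\eqref{lem:a}, or a direct neighborhood-growth bound). Multiplying the $\log^{O(1)}n$ cycles by the $k_r\cdot O(\log n)=O(\log^2 n)$ vertices near each one keeps us within $\log^{O(1)}n$, as claimed.

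The step I expect to be the real obstacle is the bookkeeping in part~\eqref{lem:a}: getting the pairing probability and the number of pair-patterns right so that the surplus edge genuinely contributes the extra $1/n$, and then verifying that the constant base $C$ of the resulting $C^s$ term is tamed by the $\frac{1}{10}$ cutoff in $t_0$. Everything else is routine once the first-moment bounds are set up, and the transfer from $G_F$ to $G(n,r)$ is immediate from the contiguity remark in Section~\ref{sec:configuration}.
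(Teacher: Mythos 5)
Your overall strategy coincides with the paper's: a first-moment/union-bound computation in the configuration model for both parts, Markov's inequality to finish part~(b), and the transfer to $G(n,r)$ via the standard comparison. Your organization of part~(b) (bound the expected number of short cycles by $O((r-1)^{k_r})=O(\log n)$, apply Markov, then multiply by a deterministic ball-size bound $k_r\cdot O(\log n)$) is a harmless repackaging of the paper's direct count of vertex--cycle incidences and works.

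The problem is in the bookkeeping for part~(a) --- precisely the step you flagged. The factor $\binom{rs}{2(s+1)}\le (rs)^{2(s+1)}$ for ``designating which points form the pairs'' and the factor $\bigl(\tfrac{rs}{rn-O(s)}\bigr)^{s+1}$ for ``the probability that a fixed collection of $s+1$ disjoint pairs all fall within $S$'' cannot both appear: once the pairs are designated, the probability that a specific pair is present in the configuration is of order $(rn)^{-1}$, not $rs/(rn)$; the latter is the probability that a single designated point gets matched to \emph{some} point of $S$, in which case you should only be choosing $s+1$ points, not $2(s+1)$. As written, your expected count for fixed $s$ is
\[
\bfrac{en}{s}^{s}(rs)^{2(s+1)}\bfrac{s}{n}^{s+1}=e^{s}r^{2s+2}\,\frac{s^{2s+3}}{n},
\]
and since $s^{2s}=n^{\Theta(\log\log n)}$ at $s=t_0$, this does \emph{not} tend to $0$; it is not of the form $C^{s}(s/n)\cdot\mathrm{poly}(s)$ as claimed. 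The repair is exactly the paper's accounting: choose which $s+1$ of the $\binom{s}{2}$ vertex pairs carry the edges, giving $\binom{\binom{s}{2}}{s+1}\le (es/2)^{s+1}$ (only $s^{s+1}$, not $s^{2(s+1)}$), choose the configuration points at the endpoints ($r^{2(s+1)}$ ways), and multiply by $p_{s+1}\le (r(n-s-1))^{-(s+1)}$; this yields $(e^{2}r)^{s}\cdot O(s/n)$ per term. One further small correction: $(e^{2}r)^{t_0}=n^{\log_{r-1}(e^{2}r)/10}$ is a genuine positive power of $n$, not $n^{o(1)}$; the role of the $\tfrac{1}{10}$ in $t_0$ is only to make this exponent strictly less than $1$ for $r\ge 4$, so that the spare factor $1/n$ from the surplus edge still wins.
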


\begin{proof}
We use the configuration model described in Section~\ref{sec:configuration}. It follows directly from the definition of this model that the probability that a given set of $k$ disjoint pairs in $W$ is contained in a random configuration is given by
\[
p_k = \frac{1}{(rn-1)(rn-3)\dots(rn-2k+1)} \le \frac{1}{(rn - 2k)^k} \le \frac{1}{r^k (n-k)^k}.
\]
Thus, in order to prove \eqref{lem:a} we bound: 
\begin{align*}
\Pr(\exists S\subseteq [n],|S|\leq t_0,e[S]\geq |S|+1)
&\leq \sum_{s=3}^{\lfloor t_0 \rfloor}\binom{n}{s}\binom{\binom{s}{2}}{s+1} r^{2(s+1)} p_{s+1}\\
&\leq \sum_{s=3}^{\lfloor t_0 \rfloor} \left( \frac{en}{s} \right)^s \left( \frac{es}{2} \right)^{s+1} \left(\frac{r}{n-(s+1)}\right)^{s+1}\\
&\leq  \frac{et_0}{2} \cdot \frac{r}{n-(t_0+1)} \cdot \sum_{s=3}^{\lfloor t_0 \rfloor} \left( \frac{en}{s} \cdot \frac{es}{2}  \cdot \frac{r}{n-(s+1)}\right)^{s}\\
&\leq  \frac{et_0}{2} \cdot \frac{r}{n-(t_0+1)} \cdot \sum_{s=3}^{\lfloor t_0 \rfloor} \left( e^2 r \right)^{s}\\
&\leq  \frac{et_0}{2} \cdot \frac{r}{n-(t_0+1)} \cdot t_0 \cdot \left( e^2 r \right)^{t_0}\\
&\leq  \frac{e r t_0^2}{2(n-(t_0+1))} \cdot n^{\frac{\log_{r-1} (e^2 r)}{10}} = o(1),
\end{align*}
as required.

We prove \eqref{lem:b} in a similar manner. The expected number of vertices within $k_r$ of a cycle of length at most $k_r$ can be bounded from above by
\begin{align*}
\sum_{\ell=0}^{\lfloor k_r \rfloor}\binom{n}{\ell} \sum_{k=3}^{\lfloor k_r \rfloor}\binom{n}{k}\frac{(k-1)!}{2}
r^{2(k+\ell)} p_{k+\ell}
& \leq \sum_{\ell=0}^{\lfloor k_r \rfloor} \sum_{k=3}^{\lfloor k_r \rfloor} n^{k+\ell} \left( \frac{r}{n-(k+\ell)} \right)^{k+\ell}\\
& \leq \sum_{\ell=0}^{\lfloor k_r \rfloor} \sum_{k=3}^{\lfloor k_r \rfloor} \left( 2r \right)^{k+\ell}\\
& \leq k_r^2 (2r)^{2k_r} = \log^{O(1)} n.
\end{align*}
Now \eqref{lem:b} follows from the Markov inequality.
\end{proof}

\subsection{Chernoff bounds}
In the next section we will use the following bounds on the tails of the binomial distribution
$\Bin(n,p)$ (for details, see, e.g., \cite{JLR}):
\begin{align}
&\Pr(\Bin(n,p)\leq \a np)\leq e^{-(1-\a)^2np/2},\quad 0\leq \a\leq 1, \label{chernoff_lower}\\
&\Pr(\Bin(n,p)\geq \a np)\leq \bfrac{e}{\a}^{\a np},\quad \a\geq 1. \label{chernoff_upper}
\end{align}

\subsection{Coloring the edges}
We now consider the problem of coloring the edges of $G=G(n,r)$. 
Let $H$ denote the line graph of 
$G$ and let $\G=H^{k_r}$ denote the 
graph with the same vertex set as $H$ and an edge between
vertices $e,f$ of $\G$ if there there is a path of length 
at most $k_r$ between $e$ and $f$ in $H$. 
Due to \eqref{eq:edges_k_r} the maximum degree $\D(\Gamma)$ satisfies
\beq{maxH}
\D(\Gamma)\leq K_1\log n.
\eeq

We will
construct a proper coloring
of $\G$ using 
\beq{qcol}
q= \lceil K_1^2r\log n \rceil
\eeq
colors. Let $e_1,e_2,\ldots,e_m$ with $m=rn/2$ be an 
arbitrary ordering of the vertices of $\G$. For 
$i=1,2,\ldots,m$, color $e_i$ with a random color, chosen 
uniformly from the set of colors not currently
appearing on any neighbor in $\G$. At this point only 
$e_1,e_2,\ldots,e_{i-1}$ will have been colored.

Suppose then that we color the edges of $G$ using the 
above method. Fix a pair of vertices $x,y$ of $G$.
\subsubsection{Tree-like and disjoint}
Assume first that  $T_x,T_y$ are vertex disjoint and that $x,y$ are both tree-like.
We see immediately, that $T_x,T_y$ fit the conditions of Corollary \ref{cor1}
with $d=r-1$ and $L=k_r$. 
Let $S_x\subseteq L(T_x)$, $S_y\subseteq L(T_y)$, $f:S_x\to S_y$ be the sets
and function promised by Corollary \ref{cor1}. 
Note that $|S_x|,|S_y|\geq K_2\log n$, where $K_2=K_1/10$.

In the analysis below we will expose the pairings in the configuration
as we need to. Thus an unpaired point of $W$ will always be paired to a 
random unpaired point in~$W$.

We now define a sequence
$A_0=S_x,A_1,\ldots,A_{t_0}$,
where $t_0$ defined as in \eqref{eq:ell1}. They are defined so that $T_x\cup A_{\leq t}$
spans a tree $T_{x,t}$ where $A_{\leq t}=\bigcup_{j\leq t}A_j$.
Given $A_1,A_2,\ldots,A_i=\set{v_1,v_2,\ldots,v_p}$
we go through $A_i$ in the order $v_1,v_2,\ldots,v_p$ and construct
$A_{i+1}$. Initially, $A_{i+1}=\emptyset$. When dealing with $v_j$ we
add $w$ to $A_{i+1}$ if:
\begin{enumerate}[(a)]
\item $w$ is a neighbor of $v_j$;
\item\label{A:b} $w\notin T_x\cup T_y\cup A_{\leq i+1}$
(we include $A_{i+1}$ in the union because we do not want to add $w$ to $A_{i+1}$
twice);
\item\label{A:c} If the path $P(v_j,x)$ from $v_j$ to $x$ in $T_{x,i}$ goes 
through $v\in S_x$ then the set of edges $E(w)$ is rainbow colored, 
where $E(w)$ comprises the edges in $P(x,v_j)+(v_j,w)$ and the
edges in the path $P(f(v),y)$ in $T_y$ from $y$ to $f(v)$.
\end{enumerate}
We do not add neighbors of $v_j$ to $A_{i+1}$ if ever one of \eqref{A:b} or \eqref{A:c} fails. We prove next that 
\beq{grow}
\Pr\left(|A_{i+1}|\leq (r-1.1)|A_i|\;\big{|}\, K_2\log n\leq |A_i|\leq n^{2/3}\right)=o(n^{-3}).
\eeq

Let $X_{\ref{A:b}}$ and $X_{\ref{A:c}}$ be the number of vertices lost because of case \eqref{A:b} and \eqref{A:c}, respectively.
Observe that
\begin{equation}\label{eq:Abounds}
(r-1)|A_i|-X_{\ref{A:b}}-X_{\ref{A:c}} \le |A_{i+1}| \le (r-1)|A_i|
\end{equation}
First we show that  $X_{\ref{A:b}}$ is dominated by the binomial random variable 
\[
Y_{\ref{A:b}} \sim (r-1)\Bin\left((r-1)|A_i|,\frac{r|A_i|}{rn/2-rn^{2/3}}\right)
\]
conditioning on $K_2\log n\leq |A_i|\leq n^{2/3}$.
This is because we have to pair up $(r-1)|A_i|$ points and each point
has a probability less than $\frac{r|A_i|}{rn/2-rn^{2/3}}$ of being
paired with a point in $A_i$. (It cannot be paired with a point in
$A_{\leq i-1}$ because these points are already paired up at this
time). We multiply by $(r-1)$ because one ``bad'' point ``spoils'' the
vertex. Thus, \eqref{chernoff_upper} implies that
\[
\Pr(X_{\ref{A:b}}\geq |A_i|/20) \le \Pr(Y_{\ref{A:b}}\geq |A_i|/20)\leq \bfrac{40er(r-1)^2|A_i|}{n}^{|A_i|/20}=o(n^{-3}).
\]
We next observe that $X_{\ref{A:c}}$ is dominated by 
\[
Y_{\ref{A:c}} \sim (r-1)\Bin\brac{r|A_i|, \frac{4\log_{r-1}n}{q}}.
\] 
To see this we first observe that
$|E(w)|\leq 2\log_{r-1}n$, with room to spare. 
Consider an edge $e=(v_j,w)$ and
condition on the colors of every edge other than $e$. We examine the
effect of this conditioning, which we refer to as $\cC$.

We let $c(e)$ denote the color of edge $e$ in a given coloring. 
To prove our assertion about binomial domination, we prove that
for any color $x$,
\beq{colcond}
\Pr(c(e)=x\mid\cC)\leq \frac{2}{q}.
\eeq

We observe first that for a particular coloring $c_1,c_2,\ldots,c_m$ 
of the edges $e_1,e_2,\ldots,e_m$ we have
$$\Pr(c(e_i)=c_i,\,i=1,2,\ldots,m)=\prod_{i=1}^m\frac{1}{a_i}$$
where $q-\D\leq a_i\leq q$ is the number of 
colors available for the color of the edge $e_i$ 
given the coloring so far i.e. the number of colors
unused by the neighbors of $e_i$ in $\G$ when it is about to be colored.

Now fix an edge $e=e_i$ and the colors $c_j,\,j\neq i$. 
Let $C$ be the set of colors not used by the neighbors of $e_i$ in $\G$.
The choice by $e_i$ of its color under this conditioning 
is not quite random, but close. Indeed, we claim that for $c,c'\in C$
$$\frac{\Pr(c(e)=c\mid c(e_j)=c_j,\,j\neq i)}{\Pr(c(e)=
c'\mid c(e_j)=c_j,\,j\neq i)}\leq \bfrac{q-\D}{q-\D-1}^\D.$$
This is because, changing the color of $e$ only affects 
the number of colors available to neighbors of $e_i$, and only by at most one.
Thus, for $c\in C$, we have
\beq{uppcol}
\Pr(c(e)=c\mid c(e_j)=c_j,\,j\neq i)\leq \frac{1}{q-\D}\bfrac{q-\D}{q-\D-1}^\D.
\eeq
Now from \eqref{maxH} and \eqref{qcol} we see that
$\D\leq \frac{q}{K_1r}$ and so \eqref{uppcol} implies \eqref{colcond}.

Applying \eqref{chernoff_upper} we now see that
\[
\Pr(X_{\ref{A:c}}\geq |A_i|/20) \le \Pr(Y_{\ref{A:c}}\geq |A_i|/20) \leq \bfrac{80e(r-1)}{K_1^2}^{|A_i|/20}=o(n^{-3}).
\]
This completes the proof of \eqref{grow}. Thus, \eqref{grow} and \eqref{eq:Abounds} implies that \whp
\[
|A_{t_0}| \ge (r-1.1)^{t_0} \ge (r-1)^{\frac{1}{2}t_0} = n^{{1}/{20}}
\]
and
\[
|A_{t_0}| \le (r-1)^{t_0} |A_0| \le K_1 n^{{1}/{10}}\log n,
\]
since trivially $|A_0| \le K_1\log n$.

In a similar way, we define a sequence of sets $B_0=S_y,B_1,\ldots,B_{t_0}$
disjoint from $A_{\leq t_0}$. Here $T_y\cup B_{\leq t_0}$ spans a tree $T_{y,t_0}$.
As we go along 
we keep an injection $f_i:B_i\to A_i$ for $0\leq i\leq t_0$.
Suppose that $v\in B_i$. If $f_i(v)$ has no neighbors in $A_{i+1}$ because 
\eqref{A:b} or \eqref{A:c} failed then we do not try to add its neighbors to $B_{i+1}$.
Otherwise, we pair up its $(r-1)$ neighbors $b_1,b_2,\ldots,b_{r-1}$
outside $A_{\leq i}$ in
an arbitrary manner with the $(r-1)$ neighbors $a_1,a_2,\ldots,a_{r-1}$.
We will add $b_1,b_2,\ldots,b_{r-1}$ to $B_{i+1}$ and define 
$f_{i+1}(b_j)=a_j,\,j=1,2,\ldots,r-1$ if for each $1\leq j\leq r-1$ we have 
$b_j\notin A_{\leq t_0}\cup T_x\cup T_y\cup B_{\leq i+1}$ and 
the unique
path $P(b_j,y)$ of length $i+k_r$ from $b_i$ to $y$ in $T_{y,i}$
is rainbow colored and
furthermore, its colors are disjoint from the colors in the path
$P(a_j,x)$ in $T_{x,i}$. Otherwise, we do not grow from $v$.
The argument that we used for \eqref{grow} will show that
\[
\Pr\left(|B_{j+1}|\leq (r-1.1)|B_j| \; \big{|}\, K_2\log n\leq |B_j|\leq n^{2/3}\right)=o(n^{-3}).
\]
The upshot is that \whp\  we have $B_{t_0}$ and $A'_{t_0}=f_{t_0}(B_{t_0})$ of size
at least $n^{1/20}$. 

Our aim now is to show that \whp\  one can find vertex disjoint paths
of length $O(\log_{r-1}n)$ joining $u\in B_{t_0}$ to $f_{t_0}(u)\in A_{t_0}$
for at least half of the choices for $u$.

Suppose then that $B_{t_0}=\set{u_1,u_2,\ldots,u_p}$ and we have found 
vertex disjoint paths $Q_j$ joining $u_j$ and $v_j=f_{t_0}(u_j)$ for $1\leq j<i$.
Then we will try to grow breadth first trees $T_i,T_i'$ from $u_i$ and $v_i$ until
we can be almost sure of finding an edge joining their leaves. We will consider
the colors of edges once we have found enough paths. 

Let $R=A_{\leq t_0}\cup B_{\leq t_0}\cup T_x\cup T_y$. Then fix $i$ and define a sequence
of sets $S_0=\set{u_i},S_1,S_2,\ldots,S_t$ where we stop when either
$S_t=\emptyset$ or $|S_t|$ first reaches size $n^{3/5}$. Here $S_{j+1}=
N(S_j)\setminus (R\cup S_{\leq j})$. ($N(S)$ will be the set of neighbors
of $S$ that are not in $S$). The number of vertices excluded from $S_{j+1}$ is
less than $O(n^{1/10}\log n)$ (for $R$) plus $O(n^{1/10}\log n \cdot n^{3/5})$ for $S_{\leq j}$.
Since
\[
\frac{O(n^{1/10}\log n \cdot n^{3/5})}{n} = O(n^{-3/10}\log n) = O(n^{-3/11}),
\]
$|S_{j+1}|$ dominates the binomial random variable
\[
Z \sim \Bin\left((r-1)|S_j|,1-O(n^{-3/11})\right).
\]
Thus, by \eqref{chernoff_lower}
\begin{multline}\label{AT1}
\Pr\big{(}|S_{j+1}|\leq (r-1.1)|S_j|\; \big{|}\,100<|S_j|\leq n^{3/5}\big{)}\notag \\
\le \Pr\left(Z \leq (r-1.1)|S_j|\; \big{|}\, 100<|S_j|\leq n^{3/5}\right)\notag 
=o(n^{-3}).
\end{multline}
Therefore \whp, $|S_j|$ will grow at a rate $(r-1.1)$ once it reaches a
size exceeding~100. We must therefore estimate the number of times that 
this size is not reached. We can bound this as follows. If $S_j$ never
reaches 100 in size then some time in the construction of
the first $\log_{r-1}100$ $S_j$'s there will be an edge discovered between an
$S_j$ and an excluded vertex. The probability of this can be bounded by
$100 \cdot O(n^{-3/11})=O(n^{-3/11})$. So, if $\b$ denotes the number
of $i$ that fail to produce $S_t$ of size $n^{3/5}$ then
\[
\Pr(\b\geq 20)\leq o(n^{-3})+\binom{n^{1/10}\log n}{20} \cdot O(n^{-3/11})^{20}=o(n^{-3}).
\] 
Thus \whp\  there will be at least $n^{1/20}-20>n^{1/21}$ of the $u_i$ from which we
can grow a tree with $n^{3/5}$ leaves $L_{i,y}$
such that all these trees are
vertex disjoint from each other and $R$.

By the same argument we can find at least $n^{1/21}$ of the $v_i$ 
from which we
can grow a tree $L_{i,x}$ with $n^{3/5}$ leaves such that all these trees are
vertex disjoint from each other and $R$ {\em and the trees grown from the}
$u_i$. We then observe that if $e(L_{i,x},L_{i,y})$ denotes the edges from
$L_{i,x}$ to $L_{i,y}$ then
$$\Pr(\exists i:e(L_{i,x},L_{i,y})=\emptyset)\leq n^{1/20}\brac{1-\frac{(r-1)n^{3/5}}
{rn/2}}^{(r-1)n^{3/5}}=o(n^{-3}).$$
We can therefore \whp\  choose an edge $f_i\in e(L_{i,x},L_{i,y})$ for 
$1\leq i\leq n^{1/21}$. Each edge $f_i$ defines a path $Q_i$ from $x$ to $y$ of
length at most $2\log_{r-1}n$. Let $Q_i'$ denote that part of $Q_i$ that goes from
$u_i\in A_{t_0}$ to $v_i\in B_{t_0}$. The path $Q_i$ will be rainbow colored if the edges of
$Q_i'$ are rainbow colored and distinct from the colors in the path from
$x$ to $u_i$ in $T_{x,t_0}$ and the colors in the path from $y$ to $v_i$ in $T_{y,t_0}$. The probability that $Q_i'$ satisfies this condition is at least $\brac{1-\frac{2\log_{r-1}n}{q}}^{2\log_{r-1}n}$. Here we have used \eqref{colcond}. In fact, using \eqref{colcond} we see that 
\begin{align*}
\Pr(\not\exists i:Q_i\text{ is rainbow colored})&\leq 
\brac{1-\brac{1-\frac{2\log_{r-1}n}{q}}^{2\log_{r-1}n}}^{n^{1/21}}\\
&\leq \brac{1-\frac{1}{n^{4/(rK_1^2)}}}^{n^{1/21}}=o(n^{-3}).
\end{align*}
This completes the case where $x,y$ are both tree-like and $T_x\cap T_y=\emptyset$.
\subsubsection{Tree-like but not disjoint}
Suppose now that $x,y$ are both tree-like and $T_x\cap T_y\neq\emptyset$.
If $x\in T_y$ or $y\in T_x$ then there is nothing more to do as each root to
leaf path of $T_x$ or $T_y$ is rainbow.

Let $a\in T_y\cap T_x$ be such that its parent in $T_x$ is not in $T_y$.
Then $a$ must be a leaf of $T_y$. We now bound the number of leaves $\l_a$ in $T_y$
that are descendants of $a$ in $T_x$. For this we need the distance of $y$
from $T_x$. Suppose that this is $h$. Then 
$$\l_a=1+(r-2)+(r-1)(r-2)+(r-1)^2(r-2)+\cdots+(r-1)^{k_r-h-1}(r-2)=(r-1)^{k_r-h}+1.$$ 
Now from Lemma \ref{density} we see that there will be at most two choices
for $a$. Otherwise, $T_x\cup T_y$ will contain at least two cycles of length
less than $2k_r$. It follows that \whp\  there at most $\l_0=2((r-1)^{k_r-h}+1)$ leaves
of $T_y$ that are in $T_x$. If $(r-1)^h\geq 201$ then $\l_0\leq |S_y|/10$. Similarly,
if $(r-1)^h\geq 201$ then at most $|S_x|/10$ leaves of $T_x$ will be in $T_y$.
In which case we can use the proof for $T_x\cap T_y=\emptyset$ with $S_x,S_y$
cut down by a factor of at most $4/5$.

If $(r-1)^h\leq 200$, implying that $h\leq 5$ then we proceed as follows:
We just replace $k_r$ by $k_r+5$ in our definition of $T_x,T_y$, for these pairs. Nothing 
much will change. We will need to make $q$ bigger by a constant factor, 
but now we will have $y\in T_x$ and we are done.
\subsubsection{Non tree-like}\label{nontree}
We can assume that if $x$ is non tree-like then $T_x$ contains exactly one cycle $C$.
We first consider the case where $C$ contains an edge $e$ that is more than distance 5
away from $x$. Let $e=(u,v)$ where $u$ is the parent of $v$ and $u$ is at distance 5 from $x$. Let $\wT_x$ be obtained from $T_x$ by deleting the edge $e$ and adding two trees $H_u,H_v$, one rooted
at $u$ and one rooted at $v$ so that $\wT_x$ is a complete $(r-1)$-ary tree of height
$k_r$. Now color $H_u,H_v$ so that Lemma \ref{lemcol1} can be applied. 
We create $\wT_y$ from $T_y$ in the same way, if necessary.
We obtain
at least $(r-1)^{2k_r}/5$ pairs. But now we must subtract pairs that correspond to
leaves of $H_u,H_v$. By construction there are at most $4(r-1)^{2k_r-5}\leq (r-1)^{2k_r}/10$.
So, at least $(r-1)^{2k_r}/10$ pairs can be used to complete the rest of the proof as before.

We finally deal with those $T_x$ containing a cycle of length 10 or less,
no edge of which is further than distance 10 from $x$. 
Now the expected number of vertices on cycles of length $k\leq 10$ is given by
$$k\binom{n}{k}\frac{(k-1)!}{2}\binom{r}{2}^k2^k\frac{\Psi(rn-2k)}{\Psi(rn)}
\sim \frac{(r-1)^k}{2k},$$
where $\Psi(m)=m!/(2^{m/2}(m/2)!)$. 

It follows that the expected number of edges $\m$ that are within 10 or less from a 
cycle of length 10 or less is bounded by a constant. Hence $\m=o(\log n)$ \whp\ 
and we can give each of these edges a distinct new color after the first round of coloring. 
Any rainbow colored set of edges will remain rainbow colored after this change.

Then to find a rainbow path beginning at $x$ we first take a rainbow path to some $x'$ that is distance 10 from $x$ and then seek a rainbow path from $x'$.
The path from $x$ to $x'$ will not cause a problem as the edges on this path
are unique to it.

\section{The case $d=3$}
An easy generalization of the example in Figure \ref{fig1} shows that Lemma \ref{lemcol} does not extend to binary trees. It indicates that $\k_{\ell,2}\leq 2^\ell$ and not $\Omega(2^{2\ell})$ as we would like. In this case we have not been able to prove Corollary \ref{cor1}.
\begin{figure}[h]
\includegraphics[width=\textwidth]{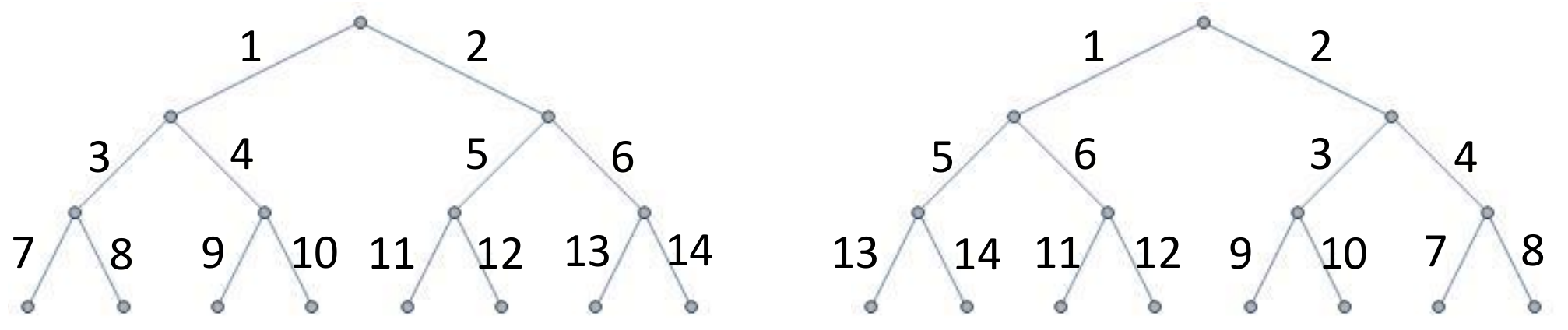}
\caption{Two rainbow trees $T_1$ and $T_2$ with $m(T_1,T_2) = 2^3$.}
\label{fig1}
\end{figure}
Note that while the example shows that $m(T_1,T_2)=2^\ell$, it does show there is a bijection $f$ between the leaves of the two trees so that $P_{x,T_1}\cup P_{f(x),T_2}$ is rainbow. In fact, an elegant probabilistic argument due to Noga Alon \cite{noga} shows that with the hypothesis of Lemma \ref{lemcol}, there are always sets $S_i\subseteq L_i$ and a bijection $f:S_1\to S_2$ such that (i) $|S_1|=|S_2|=\Omega(2^\ell)$ and such that (ii) $x\in S_1$ implies that $P_{x,T_1}\cup P_{f(x),T_2}$ is rainbow. This is a step in the right direction and it can be used to show that $O\brac{\bfrac{\log n}{\log\log n}^2}$ colors suffice, beating the bound implied by \cite{BCRR}.
\section{Conclusion} 
\label{sec:concl} 
We have shown that \whp\  $rc(G(n,r))=O(\log n)$ for $r\geq 4$ and 
$r=O(1)$. Determining the hidden constant seems challenging. We have seen that the argument for $d\geq 4$ cannot be extended to $d=3$ and so this case represents a challenge.

At a more technical level, we should also consider the case where $r\to\infty$ 
with~$n$. Part of this can be handled by the sandwiching results of Kim and Vu
\cite{KV} (see also~\cite{DFRS}).

\bigskip

\noindent
{\bf Acknowledgement} We are grateful to Noga Alon for help on the case $d=3$.

\providecommand{\bysame}{\leavevmode\hbox to3em{\hrulefill}\thinspace}

\end{document}